\definecolor{darkgreen}{rgb}{0,0.45,0}
\DeclareMathAlphabet{\mathbf}{OT1}{cmr}{b}{n}
\def\matrixobject@{%
  \edef \next@{={\DirectionfromtheDirection@ }}%
  \expandafter \toks@ \next@ \plainxy@
  \let\xy@@ix@=\xyq@@toksix@
  \xyFN@ \OBJECT@}
\let\xy@entry@@norm=\entry@@norm
\def\entry@@norm@patched{%
  \let\object@=\matrixobject@
  \xy@entry@@norm }
\newcommand{\twocong}[2][0.5]{\ar@{}[#2] \save ?(#1)*{\cong}\restore}
\newcommand{\twoeq}[2][0.5]{\ar@{}[#2] \save ?(#1)*{=}\restore}
\newcommand{\rtwocell}[3][0.5]{\ar@{}[#2] \ar@{=>}?(#1)+/l 0.2cm/;?(#1)+/r 0.2cm/^{#3}}
\newcommand{\ltwocell}[3][0.5]{\ar@{}[#2] \ar@{=>}?(#1)+/r 0.2cm/;?(#1)+/l 0.2cm/^{#3}}
\newcommand{\ltwocello}[3][0.5]{\ar@{}[#2] \ar@{=>}?(#1)+/r 0.2cm/;?(#1)+/l 0.2cm/_{#3}}
\newcommand{\dtwocell}[3][0.5]{\ar@{}[#2] \ar@{=>}?(#1)+/u  0.2cm/;?(#1)+/d 0.2cm/^{#3}}
\newcommand{\dltwocell}[3][0.5]{\ar@{}[#2] \ar@{=>}?(#1)+/ur  0.2cm/;?(#1)+/dl 0.2cm/^{#3}}
\newcommand{\drtwocell}[3][0.5]{\ar@{}[#2] \ar@{=>}?(#1)+/ul  0.2cm/;?(#1)+/dr 0.2cm/^{#3}}
\newcommand{\dthreecell}[3][0.5]{\ar@{}[#2] \ar@3{->}?(#1)+/u  0.2cm/;?(#1)+/d 0.2cm/^{#3}}
\newcommand{\utwocell}[3][0.5]{\ar@{}[#2] \ar@{=>}?(#1)+/d 0.2cm/;?(#1)+/u 0.2cm/_{#3}}
\newcommand{\dtwocelltarg}[3][0.5]{\ar@{}#2 \ar@{=>}?(#1)+/u  0.2cm/;?(#1)+/d 0.2cm/^{#3}}
\newcommand{\utwocelltarg}[3][0.5]{\ar@{}#2 \ar@{=>}?(#1)+/d  0.2cm/;?(#1)+/u 0.2cm/_{#3}}
\newcommand{\cat}[1]{\mathbf{#1}}
\newcommand{\id}{\mathrm{id}}
\newcommand{\thg}{{\mathord{\text{--}}}}
\newcommand{\spn}[1]{{\langle{#1}\rangle}}
\newcommand{\defeq}{\mathrel{\mathop:}=}
\newcommand{\cd}[2][]{\vcenter{\hbox{\xymatrix#1{#2}}}}
\renewcommand{\phi}{\varphi}
\newcommand{\C}{{\mathscr C}}
\newcommand{\xtor}[1]{\cdl[@1]{{} \ar[r]|-{\object@{|}}^{#1} & {}}}
\def\hookleftarrowfill@{\arrowfill@\leftarrow\relbar{\relbar\joinrel\rhook}}
\def\twoheadleftarrowfill@{\arrowfill@\twoheadleftarrow\relbar\relbar}
\def\leftbararrowfill@{\arrowdoublefill@{\leftarrow\mkern-5mu}\relbar\mapstochar\relbar\relbar}
\def\Leftbararrowfill@{\arrowdoublefill@{\Leftarrow\mkern-2mu}\Relbar\Mapstochar\Relbar\Relbar}
\def\leftringarrowfill@{\arrowdoublefill@{\leftarrow\mkern-3mu}\relbar{\mkern-3mu\circ\mkern-2mu}\relbar\relbar}
\def\lefttriarrowfill@{\arrowfill@{\mathrel\triangleleft\mkern0.5mu\joinrel\relbar}\relbar\relbar}
\def\Lefttriarrowfill@{\arrowfill@{\mathrel\triangleleft\mkern1mu\joinrel\Relbar}\Relbar\Relbar}
\def\hookrightarrowfill@{\arrowfill@{\lhook\joinrel\relbar}\relbar\rightarrow}
\def\twoheadrightarrowfill@{\arrowfill@\relbar\relbar\twoheadrightarrow}
\def\rightbararrowfill@{\arrowdoublefill@{\relbar\mkern-0.5mu}\relbar\mapstochar\relbar\rightarrow}
\def\Rightbararrowfill@{\arrowdoublefill@{\Relbar\mkern-2mu}\Relbar\Mapstochar\Relbar\Rightarrow}
\def\rightringarrowfill@{\arrowdoublefill@\relbar\relbar{\mkern-2mu\circ\mkern-3mu}\relbar{\mkern-3mu\rightarrow}}
\def\righttriarrowfill@{\arrowfill@\relbar\relbar{\relbar\joinrel\mkern0.5mu\mathrel\triangleright}}
\def\Righttriarrowfill@{\arrowfill@\Relbar\Relbar{\Relbar\joinrel\mkern1mu\mathrel\triangleright}}
\def\leftrightarrowfill@{\arrowfill@\leftarrow\relbar\rightarrow}
\def\mapstofill@{\arrowfill@{\mapstochar\relbar}\relbar\rightarrow}
\renewcommand*\xleftarrow[2][]{\ext@arrow 20{20}0\leftarrowfill@{#1}{#2}}
\providecommand*\xLeftarrow[2][]{\ext@arrow 60{22}0{\Leftarrowfill@}{#1}{#2}}
\providecommand*\xhookleftarrow[2][]{\ext@arrow 10{20}0\hookleftarrowfill@{#1}{#2}}
\providecommand*\xtwoheadleftarrow[2][]{\ext@arrow 60{20}0\twoheadleftarrowfill@{#1}{#2}}
\providecommand*\xleftbararrow[2][]{\ext@arrow 10{22}0\leftbararrowfill@{#1}{#2}}
\providecommand*\xLeftbararrow[2][]{\ext@arrow 50{24}0\Leftbararrowfill@{#1}{#2}}
\providecommand*\xleftringarrow[2][]{\ext@arrow 10{26}0\leftringarrowfill@{#1}{#2}}
\providecommand*\xlefttriarrow[2][]{\ext@arrow 80{24}0\lefttriarrowfill@{#1}{#2}}
\providecommand*\xLefttriarrow[2][]{\ext@arrow 80{24}0\Lefttriarrowfill@{#1}{#2}}
\renewcommand*\xrightarrow[2][]{\ext@arrow 01{20}0\rightarrowfill@{#1}{#2}}
\providecommand*\xRightarrow[2][]{\ext@arrow 04{22}0{\Rightarrowfill@}{#1}{#2}}
\providecommand*\xhookrightarrow[2][]{\ext@arrow 00{20}0\hookrightarrowfill@{#1}{#2}}
\providecommand*\xtwoheadrightarrow[2][]{\ext@arrow 03{20}0\twoheadrightarrowfill@{#1}{#2}}
\providecommand*\xrightbararrow[2][]{\ext@arrow 01{22}0\rightbararrowfill@{#1}{#2}}
\providecommand*\xRightbararrow[2][]{\ext@arrow 04{24}0\Rightbararrowfill@{#1}{#2}}
\providecommand*\xrightringarrow[2][]{\ext@arrow 01{26}0\rightringarrowfill@{#1}{#2}}
\providecommand*\xrighttriarrow[2][]{\ext@arrow 07{24}0\righttriarrowfill@{#1}{#2}}
\providecommand*\xRighttriarrow[2][]{\ext@arrow 07{24}0\Righttriarrowfill@{#1}{#2}}
\providecommand*\xmapsto[2][]{\ext@arrow 01{20}0\mapstofill@{#1}{#2}}
\providecommand*\xleftrightarrow[2][]{\ext@arrow 10{22}0\leftrightarrowfill@{#1}{#2}}
\providecommand*\xLeftrightarrow[2][]{\ext@arrow 10{27}0{\Leftrightarrowfill@}{#1}{#2}}
\theoremstyle{plain}
\newtheorem{Thm}{Theorem}
\newtheorem{Prop}[Thm]{Proposition}
\newtheorem{Lemma}[Thm]{Lemma}
\theoremstyle{definition}
\newtheorem{Defn}[Thm]{Definition}
\newtheorem{Ex}[Thm]{Example}
\begin{document}
\leftmargini=2em 

\title{When coproducts are biproducts}

\author{Richard Garner} 
\address{Department of Mathematics, Macquarie University, NSW 2109,
  Australia} 
\email{richard.garner@mq.edu.au}

\author{Daniel Sch\"appi} 
\address{School of Mathematics and Statistics, University of
  Sheffield, Sheffield, S3 7RH, UK}
\email{d.schaeppi@sheffield.ac.uk} 

\thanks{The first author gratefully acknowledges the support of
Australian Research Council Discovery Projects DP110102360 and
DP130101969. The second author gratefully ackowledges the support of the
Swiss National Foundation Fellowship P2SKP2\_148479.}


\maketitle

\begin{abstract}
  Among monoidal categories with finite coproducts preserved by
  tensoring on the left, we characterise those with finite
  \emph{biproducts} as being precisely those in which the initial
  object and the coproduct of the unit with itself admit right duals.
  This generalises Houston's result that any compact closed category
  with finite coproducts admits biproducts.
\end{abstract}

\section{Background and statement of results}
\label{sec:background}

Recall that a monoidal category is \emph{compact closed} (also
\emph{autonomous}) when every object has both a left and right dual;
key examples include the categories of finite dimensional vector
spaces, and of sets and relations. In~\cite{Houston2008Finite},
Houston proves that in a compact closed category, finite products and
coproducts coincide; more precisely, they are
\emph{biproducts}:

\begin{Defn}
  \label{def:1}
  Let $\C$ be a category with a \emph{zero object}: an object
  $0 \in \C$ which is initial and terminal. A coproduct cocone
  $(\iota_i \colon A_i \rightarrow A)_{i \in I}$ in $\C$ is a
  \emph{biproduct}~\cite[\S VIII.2]{Mac-Lane1971Categories} if
  \begin{equation}
    \label{eq:3}
    (\pi_k \colon A \rightarrow A_k)_{k \in I}
  \end{equation}
  is a product cone, where $\pi_k$ is the unique morphism with
  $\pi_k \iota_k=1_{A_k}$ and with $\pi_k \iota_i = 0$ for $i \neq k$;
  here, for $X,Y \in \C$, $0 \colon X \rightarrow Y$ denotes the
  composite of unique maps $X \rightarrow 0 \rightarrow Y$.
\end{Defn}

Houston's proof does not adapt to give a characterisation of
categories with biproducts among the (different) class of symmetric
monoidal closed categories; in a question on
MathOverflow~\cite{Barton2010Semiadditivity}, Barton asked whether
such a characterisation could be given as the two requirements that
finite coproducts exist, and that the initial object and the coproduct
of the unit with itself have duals. After helpful conversations with
Mike Shulman, the second author was able to answer this question
affirmatively; some time later, the first author, inspired by
discussions with James Dolan, found a simpler version of the proof
which generalises to the non-symmetric monoidal case (and thus
recovers Houston's result). The goal of this note, then, is to give a
streamlined proof of:

\begin{Thm}
  \label{thm:1}
  If $\C = (\C, \otimes, I)$ is a monoidal category possessing finite
  coproducts preserved by each $A \otimes (\thg)$, then $\C$ has a
  zero object and finite biproducts if and only if the initial object
  $0$ and coproduct $I+I$ have right duals.
\end{Thm}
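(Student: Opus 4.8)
The plan is to treat the two implications separately, with the bulk of the work in the ``if'' direction. Throughout I write $D$ for the chosen right dual of $I+I$, so that tensoring yields an adjunction $(\thg)\otimes(I+I)\dashv(\thg)\otimes D$ with hom-isomorphism $\C(X\otimes(I+I),Y)\cong\C(X,Y\otimes D)$; and I record at the outset that, since each $A\otimes(\thg)$ preserves finite coproducts, there are natural isomorphisms $A\otimes 0\cong 0$ and $A\otimes(X+Y)\cong A\otimes X+A\otimes Y$, in particular $A\otimes(I+I)\cong A+A$.

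For the ``only if'' direction, suppose $\C$ has a zero object and finite biproducts. I would first show $0$ is its own right dual: the (co)evaluations are forced to be the essentially unique maps $I\to 0\cong 0\otimes 0$ and $0\otimes 0\cong 0\to I$, and both triangle identities hold automatically because each of their composites is an endomorphism of $0$, of which there is only one. For $I+I=I\oplus I$ I would take $D=I\oplus I$ and build the (co)evaluations as ``identity matrices'' $I\to(I\oplus I)\otimes(I\oplus I)\cong I^{\oplus 4}$ and $I^{\oplus 4}\cong(I\oplus I)\otimes(I\oplus I)\to I$ from the biproduct projections and injections; the triangle identities then reduce, via bilinearity of composition in a semiadditive category, to $\sum_k\delta_{ik}\delta_{kj}=\delta_{ij}$. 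This direction is essentially bookkeeping with the biproduct matrix calculus.

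For the ``if'' direction I would proceed in stages. First, the right dual of $0$ yields the zero object: the adjunction gives $\C(X,Y\otimes 0^*)\cong\C(X\otimes 0,Y)\cong\C(0,Y)$, and the last set is a singleton since $0$ is initial; hence $Y\otimes 0^*$ is terminal for every $Y$, and taking $Y=0$ shows $0\cong 0\otimes 0^*$ is terminal, so $0$ is a zero object. Second, from the right dual of $I+I$ I obtain, naturally in $X,Y$, bijections
\[
\C(X,Y)\times\C(X,Y)\cong\C(X+X,Y)\cong\C(X\otimes(I+I),Y)\cong\C(X,Y\otimes D),
\]
so that $Y\otimes D$ is a binary product $Y\times Y$, with projections $Y\otimes\varepsilon_i$ coming from the two components $\varepsilon_i\colon D\to I$ of the evaluation $D\otimes(I+I)\cong D+D\to I$. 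With a zero object in hand this gives a canonical comparison $\kappa_Y\colon Y+Y\to Y\times Y$, and I would check $\kappa_Y=Y\otimes\kappa_I$, reducing its invertibility to the case $Y=I$.

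The hard part will be precisely this: showing that $\kappa_I\colon I+I\to I\times I\cong D$ is an isomorphism. The difficulty is structural---the coproduct $I+I$ is adapted to mapping out and the product $D$ to mapping in, and because tensoring preserves coproducts only on the left the objects $(I+I)\otimes D$ and $D\otimes(I+I)$ decompose asymmetrically---so no formal Yoneda argument closes the gap. My plan is to read the coevaluation $\eta\colon I\to(I+I)\otimes D\cong(I+I)\times(I+I)$ as the pair $\langle\iota_1,\iota_2\rangle$ of coproduct injections, to extract from $\eta$ an explicit candidate inverse $D\to I+I$, and to verify that the two round-trip composites are identities by using the coproduct universal property on one side, the product universal property on the other, and the triangle identities to pass between them. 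Granting $\kappa_I$ invertible, every $\kappa_Y$ is invertible, so each square is simultaneously a product and a coproduct; transporting the product diagonal across $\kappa_Y^{-1}$ furnishes a natural comonoid structure $\delta_Y\colon Y\to Y+Y$ with the zero maps as counits, and setting $f+g\defeq[f,g]\circ\delta_X$ makes $\C$ enriched in commutative monoids. I would then invoke the standard fact that a category with finite coproducts, a zero object, and such an enrichment has finite biproducts---the product of $A$ and $B$ being $A+B$ with projections $[\id,0]$, $[0,\id]$ and pairing $\langle f,g\rangle=\iota_A f+\iota_B g$---which completes the argument.
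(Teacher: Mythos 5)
There is a genuine gap, and it begins with the orientation of your duality hom-isomorphisms. In the paper's Definition~\ref{def:2}, $D$ being a \emph{right} dual of $I+I$ means $\C(A, B\otimes(I+I)) \cong \C(A\otimes D, B)$, i.e.\ $(\thg)\otimes D \dashv (\thg)\otimes(I+I)$; you instead posit $(\thg)\otimes(I+I)\dashv(\thg)\otimes D$, and similarly $(\thg)\otimes 0\dashv(\thg)\otimes 0^*$, which says that $0$ and $I+I$ have \emph{left} duals. This is not a cosmetic convention issue: the paper's closing remarks, via Examples~\ref{ex:1} and~\ref{ex:2}, show that left-tensor preservation of coproducts together with \emph{left} duals does not imply semi-additivity, so the statement your hom-isomorphisms actually support is false. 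Concretely, take Example~\ref{ex:1} with the tensor reversed, $F\otimes G \defeq G\circ F$ on endofunctors of $\{0<1\}$: each $A\otimes(\thg)$ preserves finite coproducts, and $0^*=1$, $D=\id$ satisfy exactly your hom-isomorphisms, yet $0$ is not terminal. Your argument breaks where you claim ``taking $Y=0$ shows $0\cong 0\otimes 0^*$ is terminal'': this needs $0\otimes 0^*\cong 0$, i.e.\ preservation of the initial object by tensoring on the side that is \emph{not} assumed to preserve colimits, and in the reversed example $0\otimes 0^* = 1\circ 0 = 1 \neq 0$. (You do correctly obtain that $0^*\cong I\otimes 0^*$ is terminal, but that does not make $0$ terminal.) With the correct orientation the problem evaporates, because the unit of the duality is a map $\eta\colon I\rightarrow Z\otimes 0$, and $Z\otimes 0\cong 0$ by the assumed \emph{left} preservation; the natural family $A\otimes\eta\colon A\rightarrow 0$ then gives terminality of $0$ by the terminal object lemma (Lemma~\ref{lemma:1}).

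Independently of orientation, the crux of your ``if'' direction is missing: you yourself flag the invertibility of $\kappa_I\colon I+I\rightarrow D$ as ``the hard part'' and offer only a plan for it. The round trip $I+I\rightarrow D\rightarrow I+I$ can indeed be controlled by the coproduct's universal property, but verifying that $D\rightarrow I+I\rightarrow D$ is the identity requires essentially the additive structure you are trying to construct, so the plan as stated is circular. The paper sidesteps this chicken-and-egg entirely: it never proves $D\cong I+I$ up front. Instead it uses the universal property of the dual only to extract $\pi_1,\pi_2\colon D\rightarrow I$ with $(\pi_i\otimes 1)\eta = \iota_i$, defines a mere counital comagma $\delta = (\pi_1+\pi_2)\bar\eta$ on $I$ (counitality uses terminality of $0$, which forces $\varepsilon\pi_2 = \varepsilon\pi_1$), and then Proposition~\ref{prop:2} upgrades this natural comagma to commutative-monoid enrichment via naturality of $\delta$ and the Eckmann--Hilton mediality argument---no coassociativity or cocommutativity is needed as input, and $D\cong I+I$ follows only a posteriori from uniqueness of duals. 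So to repair your proof you should (a) fix the adjunction orientation throughout, (b) replace your zero-object step by the unit map $I\rightarrow Z\otimes 0\cong 0$, and (c) abandon the direct inversion of $\kappa_I$ in favour of the comagma route (or supply the missing round-trip computation, which in effect reconstructs it). Your ``only if'' direction and your final step from commutative-monoid enrichment to biproducts are fine and agree in substance with the paper's Propositions~\ref{prop:1} and~\ref{prop:2}.
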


In fact, we prove something slightly more general. When $\C$ has finite
coproducts, the existence of biproducts is equivalent to
\emph{semi-additivity}: the existence of commutative monoid structures
on the hom-sets which are preserved by composition in each variable.

\begin{Thm}
  \label{thm:3}
  If $\C$ is a monoidal category which admits an initial object $0$
  and a coproduct $I+I$ of the unit object with itself, with both
  of these colimits being preserved by each $A \otimes (\thg)$, then $\C$ is
  semi-additive if and only if $0$ and $I+I$ have right duals.
\end{Thm}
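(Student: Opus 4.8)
The plan is to reformulate dualisability as an adjointness property and then read off the additive structure from the (co)evaluation maps. Recall that a right dual $X^{*}$ for $X$ consists of maps $\eta\colon I\to X^{*}\otimes X$ and $\varepsilon\colon X\otimes X^{*}\to I$ satisfying the two triangle identities; equivalently, it exhibits $(\thg)\otimes X^{*}\dashv(\thg)\otimes X$. The two objects in play give especially simple functors: since each $A\otimes(\thg)$ preserves $0$ and $I+I$, we have natural isomorphisms $A\otimes 0\cong 0$ and $A\otimes(I+I)\cong A+A$, so that $(\thg)\otimes 0$ is the constant functor at $0$ and $(\thg)\otimes(I+I)$ is the ``doubling'' functor $D\colon A\mapsto A+A$. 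A constant functor is a right adjoint precisely when its value is terminal; hence a right dual for $0$ forces $(\thg)\otimes 0\cong\mathrm{const}_{0}$ to be a right adjoint and so makes $0$ terminal, i.e.\ a zero object. This disposes of the easy half of the structure and is where the dual of $0$ is used.

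For the substantive implication, suppose $J\defeq I+I$ has a right dual $J^{*}$, with $\eta$ and $\varepsilon$ as above; write $u_{1},u_{2}\colon I\to J$ for the injections, $\nabla=[\id,\id]$ for the codiagonal, and (now that $0$ is a zero object) $q_{1}=[\id,0]$, $q_{2}=[0,\id]\colon J\to I$ for the projections, so $q_{k}u_{\ell}=\id$ if $k=\ell$ and $0$ otherwise. Because $(\thg)\otimes J^{*}$ is a left adjoint it preserves the coproduct $I+I$, and $J^{*}\otimes(\thg)$ preserves it by hypothesis; decomposing through $J\otimes J^{*}\cong J^{*}+J^{*}$ and $J^{*}\otimes J\cong J^{*}+J^{*}$ produces components $e_{1},e_{2}\colon J^{*}\to I$ of $\varepsilon$ and coordinates $\eta_{\ell}\defeq(J^{*}\otimes q_{\ell})\circ\eta\colon I\to J^{*}$ of $\eta$. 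Unwinding the first triangle identity $(\varepsilon\otimes J)(J\otimes\eta)=\id_{J}$ against the $u_{\ell}$ and $q_{k}$ yields the biorthogonality relations $e_{k}\circ\eta_{\ell}=\id_{I}$ for $k=\ell$ and $0$ otherwise. Setting $\mu\defeq[u_{1}e_{1},u_{2}e_{2}]\colon J^{*}\otimes J\to J$ (copairing over $J^{*}\otimes J\cong J^{*}+J^{*}$) and $d\defeq\mu\circ\eta\colon I\to J$, these relations give $q_{k}\circ d=\id_{I}$ for both $k$. Tensoring, $d_{A}\defeq(A\otimes d)$ (modulo $A\cong A\otimes I$ and $A\otimes J\cong A+A$) is natural in $A$ and satisfies $(A\otimes q_{k})\circ d_{A}=\id_{A}$; I then define $f+g\defeq[f,g]\circ d_{A}$ for $f,g\colon A\to B$, and let $0$ be the zero map. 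Naturality of $d$ together with the universal property of the coproduct already force the unit laws $f+0=f$ and the bilinearity of composition.

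What remains is to see that $+$ is commutative and associative; this is equivalent to $(J,q_{1},q_{2})$ being an honest product of $I$ with itself, i.e.\ to the completeness relation $u_{1}q_{1}+u_{2}q_{2}=\id_{J}$ exhibiting $I+I$ as a biproduct. I expect this to be the main obstacle: the first triangle identity only delivers the ``existence'' half (the pairings $\langle a,b\rangle=u_{1}a+u_{2}b$), and it is the second triangle identity $(J^{*}\otimes\varepsilon)(\eta\otimes J^{*})=\id_{J^{*}}$ that must be made to yield completeness, breaking the apparent circularity whereby the addition needs the diagonal and the product needs the addition. The delicate point is precisely the reconciliation of the two triangle identities; once the completeness relation is in hand, $I+I$ is a biproduct and $f+g=[f,g]\circ d_{A}$ is the standard biproduct addition, which is a commutative monoid preserved by composition by the usual biproduct calculus \cite[\S VIII.2]{Mac-Lane1971Categories}. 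Thus $\C$ is semi-additive.

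For the converse, if $\C$ is semi-additive I would exhibit the duals by hand. Here $0$ is self-dual with $\eta$ and $\varepsilon$ the (unique) zero maps, the triangle identities holding because every endomorphism of $0$ is the identity. For $J=I\oplus I$ one takes $J^{*}=J$ and builds the Kronecker (co)evaluation from the biproduct data: using $J\otimes(I\oplus I)\cong J\oplus J$, let $\varepsilon\colon J\otimes J\to I$ restrict to $q_{k}$ on the $k$-th summand and $\eta\colon I\to J\otimes J$ have $k$-th coordinate $u_{k}$. The triangle identities then reduce to the completeness relation $u_{1}q_{1}+u_{2}q_{2}=\id_{J}$, which is exactly the semi-additive (biproduct) structure, so both $0$ and $I+I$ acquire right duals. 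This establishes the equivalence.
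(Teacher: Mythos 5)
Your construction is sound as far as it goes, and in fact it reproduces the paper's own data in different notation: your components $e_{k}=\varepsilon(u_{k}\otimes J^{*})$ are the paper's maps $\pi_{k}\colon D\to I$ (unique by the universal property of the dual), your $d=[u_{1}e_{1},u_{2}e_{2}]\circ\bar\eta$ is literally the paper's comultiplication $\delta=(\pi_{1}+\pi_{2})\bar\eta$, and your biorthogonality relations ($e_{k}\eta_{k}=1_{I}$, $e_{k}\eta_{\ell}=0$ for $k\neq\ell$), derived from the first triangle identity, give a valid alternative route to the counit laws $q_{k}d=\id_{I}$ --- the paper gets these more cheaply, using only terminality of $0$ via the observation $\varepsilon\pi_{2}=\varepsilon\pi_{1}$. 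Your treatment of the dual of $0$, the unit laws, bilinearity of composition, and the converse direction are all essentially correct.

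However, there is a genuine gap exactly where you flag it: commutativity and associativity of $+$ (equivalently the completeness relation $u_{1}q_{1}+u_{2}q_{2}=\id_{J}$) is announced as the ``delicate point'' to be extracted from the second triangle identity, and that extraction is never performed. This is not a deferrable verification; it is the crux of the theorem, and your own remarks identify the circularity that blocks the naive attempt --- to express $\bar\eta\colon I\to J^{*}+J^{*}$ in coordinates you would already need the additive (or product) structure you are trying to build. The paper's resolution shows the second triangle identity is not needed at all. Since your family $d_{A}=A\otimes d$ is a \emph{counital comagma structure natural in $A$} (naturality holding for all morphisms, by bifunctoriality of $\otimes$), naturality of $d$ at the two coproduct injections $\iota_{1},\iota_{2}\colon A\to A+A$ and at $d_{A}$ itself forces the induced operation $m(f,g)=[f,g]\circ d_{A}$ to satisfy the mediality law $m(m(a,b),m(c,d))=m(m(a,c),m(b,d))$, and the Eckmann--Hilton argument then delivers commutativity and associativity; this is Proposition~\ref{prop:2}. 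You already possess every ingredient needed to run that argument, so your proof is completable, but as written the decisive step is missing.
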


A key ingredient in the proof of these theorems is the ``terminal
object lemma'':

\begin{Lemma}
  \label{lemma:1}
  An object $T$ of a category $\C$ is terminal if and only if there is
  a family of maps
  $(\varepsilon_C \colon C \rightarrow T)_{C \in \C}$, natural in $C$,
  for which $\varepsilon_T=1_T \colon T \rightarrow T$.
\end{Lemma}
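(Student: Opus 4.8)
The plan is to unwind the phrase ``natural in $C$'' as the assertion that $\varepsilon = (\varepsilon_C)_{C \in \C}$ constitutes a natural transformation from the identity functor on $\C$ to the constant functor at $T$. Concretely, naturality at a morphism $f \colon C \to D$ forces $\varepsilon_D \circ f = 1_T \circ \varepsilon_C = \varepsilon_C$, since the constant functor sends every arrow to $1_T$. Once this reading is in place, both implications reduce to short formal manipulations, and I would structure the proof as the two halves of the biconditional.

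For the forward direction, I would assume $T$ terminal and simply let $\varepsilon_C$ be the unique arrow $C \to T$. Naturality is then automatic, since any two parallel arrows into a terminal object coincide, so in particular $\varepsilon_D \circ f$ and $\varepsilon_C$ agree; and the normalization $\varepsilon_T = 1_T$ holds because $1_T$ is forced to be the unique endomorphism of a terminal object. For the converse, I would take the hypothesised natural family as given, note that $\varepsilon_C$ supplies existence of an arrow $C \to T$, and then prove uniqueness by the key move below.

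The crucial observation for the converse is to instantiate the naturality condition at an \emph{arbitrary} arrow $g \colon C \to T$ itself, regarded as a morphism with codomain $T$. This yields $\varepsilon_T \circ g = \varepsilon_C$, and since $\varepsilon_T = 1_T$ by hypothesis, we conclude $g = \varepsilon_C$. Hence $\varepsilon_C$ is the unique arrow $C \to T$, so $T$ is terminal.

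The only genuine content — the ``main obstacle'', insofar as there is one — lies precisely in recognising that one should apply naturality to the very test arrow $g$ whose uniqueness is to be established, rather than to some auxiliary morphism; after that single step, the normalization $\varepsilon_T = 1_T$ does all the remaining work. Everything else is bookkeeping about the definitions of terminal object and of natural transformation, and I would expect the written proof to occupy only a few lines.
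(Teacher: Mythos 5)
Your proposal is correct and follows essentially the same route as the paper's proof: both directions match, and in particular the key uniqueness step—instantiating naturality at an arbitrary arrow $g \colon C \rightarrow T$ and using $\varepsilon_T = 1_T$ to conclude $g = \varepsilon_T g = \varepsilon_C$—is exactly the paper's argument. Your explicit unwinding of ``natural in $C$'' as a transformation from the identity functor to the constant functor at $T$ is a helpful clarification the paper leaves implicit, but it is not a different proof.
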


\begin{proof}
  The ``only if'' follows as the unique morphisms $C \rightarrow T$
  are natural in $C$. Conversely, given $\varepsilon_{(\thg)}$, there
  is the map $\varepsilon_C \colon C \rightarrow T$ from each
  $C \in \C$; to show unicity, we use $\varepsilon_T=1_T$ and
  naturality of $\varepsilon$ to see that $f = \varepsilon_T f =
  \varepsilon_C$ for any
  $f \colon C \rightarrow T$.
\end{proof}

From this we recover the following well-known result, which provides
the link between Theorems~\ref{thm:1} and~\ref{thm:3}.

\begin{Prop}
  \label{prop:1}
  If $\C$ is semi-additive with an initial object $0$, then the
  initial object is a zero object and any finite coproduct that exists
  in $\C$ is a biproduct.
\end{Prop}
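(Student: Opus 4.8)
The plan is to lean on the two consequences of semi-additivity used throughout: each hom-set $\C(X,Y)$ carries a commutative monoid structure, and composition on either side is a homomorphism of commutative monoids, so that it distributes over finite sums and, crucially, preserves the additive identity. I write $0_{X,Y}$ for this additive identity in $\C(X,Y)$; the homomorphism property says exactly that pre- or postcomposing $0_{X,Y}$ with any map again yields an additive identity.

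First I would show that the initial object $0$ is terminal by invoking the terminal object lemma (Lemma~\ref{lemma:1}). The natural candidate for the required cone is the family $\varepsilon_C \defeq 0_{C,0} \colon C \rightarrow 0$ of additive identities. Naturality is immediate: for $f \colon C \rightarrow C'$ we have $\varepsilon_{C'} f = 0_{C',0}\, f = 0_{C,0} = \varepsilon_C$, since composition preserves the additive identity. For the normalisation condition, $\varepsilon_0 = 0_{0,0}$ lies in $\C(0,0)$, which is a singleton as $0$ is initial, whence $\varepsilon_0 = 1_0$. Lemma~\ref{lemma:1} then gives that $0$ is terminal, so it is a zero object; and one checks that the zero maps $X \rightarrow 0 \rightarrow Y$ of Definition~\ref{def:1} agree with the additive identities $0_{X,Y}$, so the two notions of ``$0$'' may be used interchangeably from here on.

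Next, given a finite coproduct cocone $(\iota_i \colon A_i \rightarrow A)_{i \in I}$, I would verify that the induced cone $(\pi_k)_{k \in I}$ of Definition~\ref{def:1} is a product. Each $\pi_k \colon A \rightarrow A_k$ exists and is unique by the universal property of the coproduct, and satisfies $\pi_k \iota_k = 1_{A_k}$ and $\pi_k \iota_i = 0$ for $i \neq k$. The key identity is $\sum_{i \in I} \iota_i \pi_i = 1_A$: precomposing the left-hand side with any $\iota_j$ and distributing composition over the finite sum gives $\sum_i \iota_i (\pi_i \iota_j) = \iota_j$, so the two sides agree after precomposition with every coprojection, and uniqueness in the universal property forces the claimed equality.

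Finally, to exhibit $(\pi_k)$ as a product cone I would send a family $(f_k \colon X \rightarrow A_k)_{k \in I}$ to $h \defeq \sum_i \iota_i f_i \colon X \rightarrow A$; distributing composition shows $\pi_k h = \sum_i (\pi_k \iota_i) f_i = f_k$, while any $h'$ with $\pi_k h' = f_k$ for all $k$ satisfies $h' = (\sum_i \iota_i \pi_i)\, h' = \sum_i \iota_i f_i = h$, giving uniqueness; the nullary case $I = \emptyset$ just recovers the terminality of $0$ established above. The argument is essentially bookkeeping with the two ``matrix identities''; the only points genuinely demanding care are that composition really does preserve the additive identity (so that it serves as the categorical zero, and the empty sum behaves correctly) rather than anything deeper.
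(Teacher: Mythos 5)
Your proof is correct, and its first half---exhibiting $0$ as terminal via Lemma~\ref{lemma:1} applied to the family of additive identities $0_{C,0}$---is exactly the paper's argument, with the useful extra observation (left implicit in the paper) that these additive identities coincide with the categorical zero maps $X \rightarrow 0 \rightarrow Y$. Where you genuinely diverge is in the coproduct-to-product step. You first establish the matrix identity $\sum_{i} \iota_i \pi_i = 1_A$ by precomposing with the coprojections, and then verify the product universal property directly: existence via $h = \sum_i \iota_i f_i$, uniqueness via $h' = \bigl(\sum_i \iota_i \pi_i\bigr) h' = h$. The paper instead applies Lemma~\ref{lemma:1} a \emph{second} time, now in the category of cones over the $A_k$: the assignment $f \mapsto \varepsilon_f = \sum_i \iota_i f_i$ is a natural family of cone morphisms into $\pi$ (naturality amounting to bilinearity of composition), and the normalisation condition $\varepsilon_\pi = 1_A$ is precisely your matrix identity, so terminality of $\pi$---that is, the product property---follows with no separate uniqueness argument. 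The underlying computations are identical in the two proofs; yours is the classical direct verification in the style of Mac~Lane, trading the paper's naturality check for an explicit uniqueness computation, while the paper's route has the stylistic virtue of reducing the entire proposition to two invocations of the same terminal object lemma, which is the organising idea of the whole note. Your treatment of the nullary case and your care over the point that composition preserves additive identities (so that empty sums and zero maps behave as required) are both correct and appropriately flagged.
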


\begin{proof}
  The neutral elements of the monoids $\C(C,0)$ give a natural family
  $(C \rightarrow 0)_{C \in \C}$, so that $0$ is terminal by
  Lemma~\ref{lemma:1}. Suppose now that
  $(\iota_i \colon A_i \rightarrow A)_{i \in I}$ is a finite coproduct
  cocone. We will use Lemma~\ref{lemma:1} to show that the cone
  $\pi = (\pi_k \colon A \rightarrow A_k)_{k \in I}$ of \eqref{eq:3}
  is terminal among cones over the $A_k$'s. Given a cone
  $f = (f_k \colon B \rightarrow A_k)_{k \in I}$, we define
  ${\varepsilon_{f}=\Sigma_{i \in I} \iota_i f_i \colon B \rightarrow
  A}$.
  Then $\pi_j \varepsilon_{f}=\Sigma_i \pi_j \iota_i f_i=f_j$, so
  $\varepsilon_{f} \colon f \rightarrow \pi$ is a map of cones;
  moreover, $\varepsilon_{(\thg)}$ is natural as composition in $\C$
  is bilinear. Finally, we have
  $\varepsilon_{\pi} \iota_j= \textstyle\Sigma_i \iota_i \pi_i
  \iota_j=\iota_j$ and so $\varepsilon_{\pi}=1_A$.
\end{proof}

\section{Proofs and examples}
\label{sec:result}

Our main result is a consequence of the following necessary and
sufficient condition for a reasonable category $\C$ to be
semi-additive. We say that $\C$ has \emph{$2$-fold copowers} if all
coproducts $A+A$ exist, and that it has \emph{binary copowers} if, for
each $n \in \mathbb N$, all $2^n$-fold coproducts $A + \cdots + A$
exist; which is so just when $\C$ has $2$-fold copowers and an initial
object $0$. In a category with binary copowers, the coproducts $0+A$
and $A+0$ always exist (since they can be taken to be $A$), and so we
can talk about \emph{counital comagmas}: objects $A$ endowed with a
comultiplication $\delta \colon A \rightarrow A+A$ and a counit
$\varepsilon \colon A \rightarrow 0$ such that
$(\varepsilon + 1_A) \delta=1_A=(1_A+\varepsilon) \delta$.

\begin{Prop}\label{prop:2}
  Let $\C$ be a category with binary copowers. The following conditions are
  equivalent:
  \begin{enumerate}[(i)]
  \item The category $\C$ is semi-additive;
  \item The initial object of $\C$ is a zero object and each $2$-fold
    copower $A+A$ is a biproduct;
  \item Each $A \in \C$ bears a counital comagma structure, naturally
    in $A$.
  \end{enumerate}
\end{Prop}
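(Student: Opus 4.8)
The plan is to establish the cycle of implications (i)~$\Rightarrow$~(ii)~$\Rightarrow$~(iii)~$\Rightarrow$~(i). The first step is immediate from Proposition~\ref{prop:1}: if $\C$ is semi-additive then its initial object is a zero object and \emph{every} finite coproduct, in particular each $2$-fold copower $A+A$, is a biproduct. For (ii)~$\Rightarrow$~(iii) I would take $\varepsilon_A \colon A \to 0$ to be the unique map to the zero object and $\delta_A \colon A \to A+A$ to be the diagonal $\langle 1_A, 1_A\rangle$ furnished by the product structure of the biproduct $A+A$; the counit equations $(\varepsilon_A + 1_A)\delta_A = 1_A = (1_A + \varepsilon_A)\delta_A$ are then exactly the biproduct identities $\pi_k\iota_k = 1$, $\pi_k\iota_j = 0$ read against the two projections, and naturality of both $\varepsilon$ and $\delta$ drops out of the uniqueness clauses in the universal properties of $0$ and of the product $A+A$. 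The whole weight of the Proposition thus rests on (iii)~$\Rightarrow$~(i).

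For that implication, assume each $A$ carries a natural counital comagma $(\delta_A,\varepsilon_A)$. Since $0$ is initial, $\varepsilon_0 \colon 0 \to 0$ must equal $1_0$, so Lemma~\ref{lemma:1} applied to the natural family $\varepsilon_{(\thg)}$ shows $0$ is terminal, hence a zero object; write $0_{A,B}$ for maps factoring through it. I then define a candidate addition $f \ast g \defeq [f,g]\delta_A \colon A \to A+A \to B$, with neutral element $0_{A,B}$. The unit laws $f \ast 0 = f = 0 \ast f$ fall straight out of the two counit equations, since $[f,0_{A,B}] = f(1_A+\varepsilon_A)$ and $[0_{A,B},g] = g(\varepsilon_A+1_A)$; and bilinearity of composition over $\ast$, namely $k(f \ast g) = kf \ast kg$ and $(f\ast g)h = fh \ast gh$, follows from functoriality of copairing and from naturality of $\delta$ (which rewrites $\delta_A h$ as $(h+h)\delta_{A'}$), the absorption laws for $0_{A,B}$ being properties of the zero object.

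The main obstacle is that nothing so far yields commutativity or associativity of $\ast$, and these cannot be obtained by proving $\delta$ cocommutative or coassociative head-on --- coassociativity is not even expressible, as $3$-fold coproducts need not exist. The key step is instead to \emph{bootstrap} a biproduct structure from the meagre data just assembled. Setting $p_1 \defeq (1_A+\varepsilon_A)$ and $p_2 \defeq (\varepsilon_A+1_A)$ as maps $A+A \to A$, I would use the unit law together with naturality of $\delta$ (via $\delta_{A+A}\iota_k = (\iota_k+\iota_k)\delta_A$) to compute $(\iota_1 p_1 \ast \iota_2 p_2)\iota_k = \iota_k$ for $k=1,2$, whence $\iota_1 p_1 \ast \iota_2 p_2 = 1_{A+A}$ as the injections are jointly epic. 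Bilinearity then shows $(A+A, p_1, p_2)$ is a product: maps $g_1,g_2 \colon C \to A$ factor through $\iota_1 g_1 \ast \iota_2 g_2$, uniquely because any $h$ with $p_k h = g_k$ satisfies $h = (\iota_1 p_1 \ast \iota_2 p_2)h = \iota_1 g_1 \ast \iota_2 g_2$. Thus each $A+A$, and hence each $4$-fold copower $(A+A)+(A+A)$, is a biproduct.

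Finally I would cash this in. The map $D \defeq (\delta_A+\delta_A)\delta_A \colon A \to (A+A)+(A+A)$ satisfies $q_i D = 1_A$ against all four biproduct projections (again by the counit laws), so the product universal property forces $D$ to be the fully symmetric diagonal, invariant under every permutation of the four summands. Unwinding the definitions gives $(f\ast g)\ast(h\ast k) = [f,g,h,k]\,D$, and comparing this with the permutation interchanging the middle two summands yields the medial law $(f\ast g)\ast(h\ast k) = (f\ast h)\ast(g\ast k)$. An elementary argument (setting some arguments equal to $0$) shows that any magma with a two-sided unit satisfying the medial law is a commutative monoid, so each $\C(A,B)$ is a commutative monoid under $\ast$; combined with the bilinearity already established, this exhibits $\C$ as semi-additive and closes the cycle.
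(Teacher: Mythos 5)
Your proposal is correct, and while its first two implications match the paper exactly, your proof of the crucial implication (iii)~$\Rightarrow$~(i) takes a genuinely different route. The paper never constructs any products: it derives the mediality of $m_{AB}$ purely from naturality of $\delta$, by showing that naturality at the injections $\iota_i \colon A \to A+A$ forces $\delta_{A+A} = (1 + \spn{\iota_2,\iota_1} + 1)(\delta_A + \delta_A)$, which combined with naturality at $\delta_A$ itself gives $(1 + \spn{\iota_2,\iota_1} + 1)(\delta_A+\delta_A)\delta_A = (\delta_A+\delta_A)\delta_A$, and then cites Eckmann--Hilton. You instead first \emph{bootstrap} the biproduct structure: using only the unit laws and bilinearity of $\ast$ you prove $\iota_1 p_1 \ast \iota_2 p_2 = 1_{A+A}$ and deduce that $(A+A, p_1, p_2)$ is a product, in effect establishing (iii)~$\Rightarrow$~(ii) directly; the invariance of $D = (\delta_A+\delta_A)\delta_A$ under the middle swap --- which is exactly the paper's key identity --- then falls out of the uniqueness clause of the product universal property of the $4$-fold biproduct, rather than from a naturality computation. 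Both arguments are sound (your bootstrap is not circular, since it uses only unitality and bilinearity, never associativity or commutativity, and your verifications $(\iota_1 p_1 \ast \iota_2 p_2)\iota_k = \iota_k$ and $q_iD = 1_A$ check out), and both terminate in the same medial-plus-unital Eckmann--Hilton step, which you re-derive rather than cite. What the paper's route buys is economy --- a short diagram chase with no auxiliary constructions; what yours buys is the extra implication (iii)~$\Rightarrow$~(ii) for free, and a conceptual explanation of where mediality comes from: the diagonal into a product is fixed by every permutation of its factors. Your side remark that coassociativity of $\delta$ is not even expressible (since $A+(A+A)$ need not exist under binary copowers alone) is also accurate and explains why both proofs must work with the $2^2$-fold copower $(A+A)+(A+A)$.
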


\begin{proof}
  The implication $\text{(i)} \Rightarrow \text{(ii)}$ follows from
  Proposition~\ref{prop:1}; while
  $\text{(ii)} \Rightarrow \text{(iii)}$ follows since the diagonal
  morphism and the zero morphism are natural. For
  $\text{(iii)} \Rightarrow \text{(i)}$, let each $A \in \C$ bear
  counital comagma structure $(\delta_A, \varepsilon_A)$. For each
  $A,B \in \C$, the set $\C(A,B)$ bears a binary operation $m_{AB}$
  and constant $e_{AB}$ given by the respective composites:
  \begin{align*}
    \C(A,B)^2 \xrightarrow{\cong}
    \C(A+A,B) \xrightarrow{\C(\delta_A,B)} \C(A,B) \quad \text{and} \quad
    1 \xrightarrow{\cong}
    \C(0,B) \xrightarrow{\C(\varepsilon_A,B)} \C(A,B)\rlap{ ;}
  \end{align*}
  and by naturality of $\varepsilon$ and $\delta$, these operations
  are preserved by composition on each side. To deduce
  semi-additivity, it thus suffices to show that $(m_{AB}, e_{AB})$
  endows $\C(A,B)$ with commutative monoid structure. Since
  $(1_A+\varepsilon_A)\delta_A=1_A=(\varepsilon_A+1_A)\delta_A$, we
  see that $m_{AB}$ has $e_{AB}$ as a two-sided unit; it remains to
  prove associativity and commutativity. Naturality of $\delta$ at
  coproduct injections $\iota_i \colon A \rightarrow A + A$ (for
  $i = 1,2$) asserts the equalities
  $(\iota_i + \iota_i)\delta_A = (\delta_{A+A})\iota_i \colon A
  \rightarrow A + A + A + A$; moreover, the composite
  $(\iota_i + \iota_i)\delta_A$ is, by elementary
  properties of coproducts, equal to the composite
  \begin{equation*}
    A \xrightarrow{\iota_i} A + A \xrightarrow{\delta_A + \delta_A} A
    + A + A + A \xrightarrow{1 + \spn{\iota_2, \iota_1} + 1} A + A + A
    + A\rlap{ .}
  \end{equation*}
  Pairing these equalities together for $i=1,2$ thus yields
  commutativity in $\C$ 
  of each triangle on the left in:
  \begin{equation*}
    \cd[@C-1.1em]{
      & {A+A} \ar[dl]_-{\delta_{A} + \delta_A} \ar[dr]^-{\delta_{A + A}} & & &
      A \ar[rr]^-{\delta_A} \ar[d]_-{\delta_A} && {A+A} \ar[d]^-{\delta_{A+A}} \\
      {A + A + A + A} \ar[rr]_-{1 + \spn{\iota_2,\iota_1} + 1} & & 
      {A + A + A + A} & &
      A + A \ar[rr]_-{\delta_A + \delta_A} && A + A + A + A\rlap{ .}
    }
  \end{equation*}
  On the other hand, naturality of $\delta$ at $\delta_A$ says that
  each square on the right commutes. Combining these two and applying
  the hom-functor $\C(\thg, B)$, we conclude that $m_{AB}$ satifies
  the \emph{mediality} axiom
  $m_{AB}(m_{AB}(a,b),m_{AB}(c,d)) = m_{AB}(m_{AB}(a,c),m_{AB}(b,d))$.
  The Eckmann--Hilton argument~\cite{Eckmann1961Group-like} shows that
  any unital and medial binary operation on a set is associative and
  commutative, whence $(m_{AB}, e_{AB})$ endows each $\C(A,B)$ with a
  commutative monoid structure as required.
\end{proof}

We now prove our main theorems. First we recall:

\begin{Defn}
  \label{def:2}
  Let $(\C, \otimes, I)$ be a monoidal category. A \emph{right dual}
  for $X \in \C$ comprises an object $Y \in \C$ and a map
  $\eta \colon I \rightarrow Y \otimes X$ such that each
  $f \colon A \rightarrow B \otimes X$ is of the form
  \begin{equation*}
    A \xrightarrow{A \otimes \eta } A \otimes Y \otimes X \xrightarrow{g \otimes X} B \otimes X
  \end{equation*}
  for a unique $g \colon A \otimes Y \rightarrow B$. Note that $Y$ is
  right dual to $X$ if and only if there are natural isomorphisms
  $\theta_{A,B} \colon \C(A , B \otimes X) \rightarrow \C(A \otimes Y,
  B)$
  which are stable under tensor, in the sense that
  $(A' \otimes \thg) \circ \theta_{A,B} = \theta_{A' \otimes A, A'
    \otimes B} \circ (A' \otimes \thg) \colon \C(A,B \otimes X)
  \rightarrow \C(A' \otimes A \otimes Y, A' \otimes B)$.
\end{Defn}

\begin{proof}[Proof of Theorem~\ref{thm:3}.]
  If $\C$ is semi-additive, then by Proposition~\ref{prop:1}, the
  initial object is terminal and the coproduct
  $B\otimes(I+I) \cong B+B$ is a product. Thus there are natural
  bijections between morphisms $A \otimes 0 \rightarrow B$ and
  $A \rightarrow B \otimes 0$ on the one hand, and morphisms
  $A \otimes (I+I) \rightarrow B$ and $A \rightarrow B\otimes (I+I)$
  on the other; these isomorphisms are easily seen to be stable under
  tensor, whence both $0$ and $I+I$ are self-dual.

  In the converse direction, the assumption that $A \otimes (\thg)$
  preserves the $2$-fold copower $I+I$ implies that $\C$ has binary
  copowers, and so Proposition~\ref{prop:2} is applicable. Since there
  are natural isomorphisms $A \otimes (I+I) \cong A+A$ and
  $A \otimes 0\cong 0$, we may verify Proposition~\ref{prop:2}(iii) by
  constructing a counital comagma structure on the object $I \in \C$.

  To this end, let
  $\varepsilon \colon I \rightarrow Z \otimes 0 \cong 0$ exhibit $Z$
  as right dual to $0$ and
  $\eta \colon I \rightarrow D \otimes (I+I) $ exhibit $D$ as right
  dual to $I+I$. Lemma~\ref{lemma:1} applied to the natural family
  $\varepsilon_A=A \otimes \varepsilon \colon A \rightarrow 0$ shows
  that $0$ is terminal; in particular, $\varepsilon f = \varepsilon g$
  for any two maps $f, g \colon X \rightrightarrows I$. The defining
  property of the right dual $D$ now yields unique maps
  $\pi_1, \pi_2 \colon D \rightarrow I$ such that
  \begin{equation*}
    \iota_i = I \xrightarrow{\eta} D \otimes (I+I) \xrightarrow{\pi_i \otimes 1} I + I\rlap{ .}
  \end{equation*}
  Writing $\bar \eta$ for the composite
  $I \xrightarrow{\eta} D \otimes (I+I) \cong D + D$, it follows that
  \begin{equation*}
    \iota_i = I \xrightarrow{\bar \eta} D + D \xrightarrow{\pi_i + \pi_i} I + I\rlap{ .}
  \end{equation*}
  Now take $\delta \colon I \rightarrow I + I$ to be the composite
  $(\pi_1 + \pi_2) \bar \eta \colon I \rightarrow D + D \rightarrow I
  + I$
  and observe that
  $(1_I+\varepsilon)\delta = (1_I+\varepsilon)(\pi_1 + \pi_2)\bar\eta
  = (\pi_1 +\varepsilon \pi_2)\bar \eta = (\pi_1+\varepsilon
  \pi_1)\bar\eta = (1_I+\varepsilon)\iota_1 = 1_I$
  and dually $(\varepsilon+1_I)\delta = 1_I$. So $I$ bears counital
  comagma structure, whence each $A \in \C$ does so, naturally in $A$.
  It follows from Proposition~\ref{prop:2} that $\C$ is semi-additive.
\end{proof}

\begin{proof}[Proof of Theorem~\ref{thm:1}.]
  Propositions~\ref{prop:1} and \ref{prop:2} show that a category with
  finite coproducts is semi-additive if and only if it has finite
  biproducts. The claim therefore follows from Theorem~\ref{thm:3}.
\end{proof}

We conclude the paper with some examples showing that our
result is, in a certain sense, the best possible: the assumptions in
Theorem~\ref{thm:3} that $A \otimes (\thg)$ preserves the initial
object $0$ and the coproduct $I+I$ cannot be relaxed.

\begin{Ex}
  \label{ex:1}
  Let $\C$ be the category of endofunctors of the category
  $\mathbf{2}=\{0 < 1\}$, with the composition tensor product
  $F \otimes G \defeq F \circ G$.
  The category $\C$ is isomorphic to the ordered set
  $\{0 < \id < 1 \}$, so the coproduct $\id+\id$ is equal to $\id$ and
  is therefore both self-dual and also preserved by each
  $A \otimes (\thg)$. The initial object $0$ has as right dual the
  terminal object $1$, but $0$ is not preserved by $1 \otimes (\thg)$.
  The category $\C$ is not semi-additive since $0$ is not isomorphic
  to $1$.
\end{Ex}

\begin{Ex}
  \label{ex:2}
  Let $\cat{FinSet}_{\ast}$ be the category of finite pointed sets,
  and $\C$ the category of zero-object-preserving endofunctors of
  $\cat{FinSet}_\ast$, equipped with the composition tensor product. The
  constant functor at $0$ is self-dual, and $\id+\id$ has
  $\id \times \id$ as right dual (here: right adjoint). Every
  $A \otimes (\thg)$ preserves the initial object by assumption, but
  need not preserve the coproduct $\id + \id$. The category $\C$ is
  not semi-additive since the canonical morphism
  $\id + \id \rightarrow \id \times \id$ is not invertible.
\end{Ex}

Note that, in the ``if'' direction of our main results, we required
tensoring on the \emph{left} to preserves finite coproducts of $I$,
but for $0$ and $I+I$ to have \emph{right} duals. If we
reverse \emph{both} ``left'' and ``right'' here, then by duality we
again obtain sufficient conditions for semi-additivity, but the
preceding examples show that this may not be the case if we reverse only
\emph{one} of them. Indeed, in these examples, tensoring on the
\emph{right} preserves all finite coproducts, since these are computed
pointwise in functor categories, and $0$ and $I+I$ have \emph{right}
duals, but semi-additivity does not obtain. Dually, it may be the case
that tensoring on the \emph{left} preserves finite coproducts and that
$0$ and $I+I$ have \emph{left} duals without semi-additivity holding.

\end{document}